\newtheorem{Theorem}{Theorem} 
 \newtheorem{theorem}{Theorem}[section]
 \newtheorem{lemma}[theorem]{Lemma}
 \newtheorem{proposition}[theorem]{Proposition}
 \theoremstyle{definition}
 \newtheorem{definition}[theorem]{Definition}
 \newcommand{\TT}{\mathcal{T}}
 \newcommand{\N}{\mathbb{N}}
 \renewcommand{\H}{\operatorname{H}}
 \newcommand{\HH}{\mathcal{H}}
 \renewcommand{\L}{\operatorname{L}}
 \newcommand{\LL}{\mathcal{L}}
 \newcommand{\dsng}{\operatorname{DNSG}}
 \newcommand{\RR}{\operatorname{R}}
 \newcommand{\balpha}{\boldsymbol{\alpha}}
  \newcommand{\bbeta}{\boldsymbol{\beta}}
\title{Non-separable graphs meet Ledoux's polynomials}
\author{Paul Mansanarez}
\address{
Paul Mansanarez, Nantes Université/Universit\'e libre de Bruxelles, France/Belgium. E-mail:
paul.mansanarez@ulb.be}
\begin{document}

\begin{abstract}
In the path-breaking article \cite{LED16}, an integral representation of the derivatives of the entropy along the heat flow of a probability measure was established under suitable moment conditions.
 
These integral representations have found significant applications in diverse domains — notably in information theory (e.g., entropy power inequalities, monotonicity of Fisher information) and in estimation theory (through the link between entropy derivatives and the minimum mean square error, MMSE, in Gaussian channels). 
 
The representations involve multivariate polynomials $(\RR_n)_n$, arising from a Lie algebra framework on multilinear operators. Despite their central role, the combinatorial structure of these polynomials remains only partially understood.
 
In this note, we prove that the number of monomials in $\RR_n$ coincides with the number of degree sequences with degree sum $2n$ having a non-separable graph realization, thereby resolving a conjecture from \cite{MPS24}, and drawing an interesting link between these two domains. 
\end{abstract}
\maketitle
\section{Introduction}
In this paper, we investigate the combinatorial structure of a specific sequence of multivariate polynomials
\[
\widetilde{\RR}_n = X_n^2 + \RR_n,
\]
where $\RR_n \in \mathbb{Z}[X_2, \ldots, X_{n-1}]$ is defined recursively by $\RR_2 = 0$ and the relation
\begin{align}
\RR_{n+1} = A_n + \L(\RR_n) + \H(\RR_n),
\label{f:ind_formulaR}
\end{align}
where
\[
A_n = -\sum_{k=1}^{n-1} \binom{n}{k} X_{1+k}X_{1+n-k}X_n,
\]
and where $\L$ and $\H$ are linear operators on multivariate polynomials, defined by
\begin{align}\label{def:L}
\L(X_{\alpha_1} \cdots X_{\alpha_r}) &= \sum_{1 \leq i < j \leq r} X_{\alpha_1} \cdots X_{\alpha_i + 1} \cdots X_{\alpha_j + 1} \cdots X_{\alpha_r},
\end{align}
and
\begin{align}\label{def:H}
\H(X_{\alpha_1} \cdots X_{\alpha_r}) &= -\frac{1}{2} \sum_{k=1}^r \sum_{l=1}^{\alpha_k - 1} \binom{\alpha_k}{l} X_{1 + l} X_{1 + \alpha_k - l} \prod_{\substack{i=1 \\ i \neq k}}^{r} X_{\alpha_i},
\end{align}
for all positive integers $\alpha_1,\ldots, \alpha_r$.

As examples, the first few polynomials are:
\begin{align*}
\RR_2 &= 0, \\
\RR_3 &= -2X_2^3,\\ 
\RR_4 &= -12X_2X_3^2 + 6X_2^4,\\
\RR_5 &= -20X_2X_4^2 - 30X_3^2X_4 + 120X_2^2X_3^2 - 24X_2^5, \\
\RR_6 &= -30X_2X_5^2 - 120X_3X_4X_5 + 900X_2X_3^2X_4 + 300X_2^2X_4^2 - 30X_4^3 \\
&\quad - 1200X_2^3X_3^2 + 210X_3^4 + 120X_2^6.
\end{align*}
More explicit expressions can be computed using the algorithm described in~\cite{AlgoR}.

These polynomials arise in the algebraic framework of $\Gamma$-calculus and iterated gradients (see~\cite{BAK13, LED95}). In particular, in~\cite[Theorem 2]{LED16}, using the formalism of~\cite{LED95}, the author express the $n$-th time derivative of the {\it entropy along the heat flow} as
\begin{align}
\partial_t^n H(X + \sqrt{2t}\,N) = (-2)^{n-1} \int_{\mathbb{R}} \widetilde{\RR}_n\big(u_t^{(2)}, \ldots, u_t^{(n)}\big)(x)\,\mathrm{d}x, \quad t > 0,
\end{align}
where $H$ is the Shannon entropy (see~\cite{COV}), $X$ and $N$ are independent random variables with $N \sim \mathcal{N}(0,1)$, and $u_t^{(i)}$ is the $i$-th derivative of the function $u_t$, which depends on $X$ and $t$. The relationship between $X$ and $X_t := X + \sqrt{2t}\,N$, the {\it heat flow} starting from $X$, is a fundamental problem in information and signal theory (see~\cite[Chapter 9]{COV} or~\cite{VerGuo} for an overview). Moreover, the entropy function $t \mapsto H(X + \sqrt{2t}\,N)$ has been the subject of several conjectures (see~\cite{LED20Rev}).

In a recent contribution~\cite{MPS24}, the authors extended the work of~\cite{LED16} by exploring further the combinatorial structure underlying the polynomials $(\widetilde{\RR}_n)_n$, and obtained new results related to the MMSE conjecture (see~\cite{LED20Rev} for an introduction). This conjecture states that the entropy function $t \mapsto H(X + \sqrt{2t}\,N)$ characterizes certain classes of distributions. However, the specific functional structure of these polynomials is not the focus of the present work. Indeed, as mentioned above, we are interested in the combinatorial properties of the polynomials $(\RR_n)_n$.

While performing a comprehensive study of the polynomials $(\RR_n)_n$, the authors of~\cite{MPS24} conjectured that the number of monomials in $\RR_n$ is equal to $d_{ns}(n) - 1$, where $d_{ns}(n)$ is the number of degree sequences of sum $2n$ that admit a non-separable graph realization (see~\cite[Appendix A]{MPS24} and subsection~\ref{sb:graphs} below for a background on graph theory).

In this paper, we prove their conjecture, namely:
\begin{proposition}\label{main}
Let $n$ be an integer greater than $2$. Then the number of terms in $\RR_n$ is equal to $d_{ns}(n) - 1$.
\end{proposition}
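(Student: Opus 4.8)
The plan is to pass from the algebra of the $\RR_n$ to the combinatorics of degree sequences through the map that sends a monomial $X_2^{m_2}\cdots X_{n-1}^{m_{n-1}}$ to the integer sequence having $m_i$ parts equal to $i$. A first routine induction on \eqref{f:ind_formulaR} shows that $\RR_n$ is homogeneous of weighted degree $2n$ once $X_i$ is given weight $i$, that it involves only $X_2,\dots,X_{n-1}$, and that it never produces $X_1$. Consequently every monomial of $\RR_n$ corresponds to a degree sequence of sum $2n$ with all parts in $\{2,\dots,n-1\}$, that is, to the degree multiset of a loopless multigraph with $n$ edges, minimum degree at least $2$, and maximum degree at most $n-1$; under this dictionary the number of variables $\ell=\sum_i m_i$ of a monomial becomes the number of vertices of the multigraph.

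\textbf{No cancellation.} The first key step is to prove, by induction on $n$, that the coefficient of a monomial $M$ in $\RR_n$ has sign $(-1)^{\ell(M)}$. This is a sign-coherence statement matched to the three pieces of \eqref{f:ind_formulaR}: the monomials of $A_n$ all have $\ell=3$ and negative coefficients; $\L$ preserves $\ell$ and has nonnegative coefficients; and $\H$ increases $\ell$ by one while carrying the factor $-\tfrac12\binom{\alpha_k}{l}<0$. Hence two monomials produced in the step $\RR_n\mapsto\RR_{n+1}$ that coincide necessarily share the same sign and add up rather than cancel, so that the support of $\RR_{n+1}$ is exactly $\operatorname{supp}(A_n)\cup\L(\operatorname{supp}\RR_n)\cup\H(\operatorname{supp}\RR_n)$ and the number of monomials of $\RR_n$ equals the size of its support, which I denote $T_n$.

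\textbf{Matching the support to non-separable sequences.} The heart of the proof is the identity $T_n=S_n$ for all $n$, where $S_n$ is the set of degree sequences of sum $2n$ with at least three parts that admit a non-separable loopless-multigraph realization; I would prove it by induction, using $T_{n+1}=\operatorname{supp}(A_n)\cup\L(T_n)\cup\H(T_n)$ and reading the operators as multigraph moves. Concretely, $\L$ adds an edge between two existing vertices (raising two degrees by one), $\H$ splits a vertex of degree $\alpha_k$ into two adjacent vertices of degrees $l+1$ and $\alpha_k-l+1$ (for the extreme value of $l$ this is an edge subdivision), and $A_n$ produces exactly the three-vertex ``theta'' multigraphs on $n+1$ edges carrying a vertex of degree $n$. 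The forward inclusion $T_{n+1}\subseteq S_{n+1}$ reduces to checking that, in an appropriate realization, each move preserves non-separability. The reverse inclusion $S_{n+1}\subseteq T_{n+1}$ is the crux, and the step I expect to be the main obstacle: starting from a non-separable multigraph $H$ with $n+1$ edges and at least three vertices, I must perform a single inverse move landing on a non-separable multigraph with $n$ edges and at least three vertices (hence in $T_n$ by induction), or else recognise $H$ as an $A_n$-base. One shows first that a vertex of degree $n$ forces $H$ to have exactly three vertices and a theta shape, an $A_n$-base; otherwise $H$ has maximum degree at most $n-1$, and then a vertex of degree $2$ is smoothed (inverse $\H$), while in the remaining case of minimum degree at least $3$ a non-essential edge is deleted (inverse $\L$), invoking the classical fact that a minimally non-separable multigraph must contain a vertex of degree $2$. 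The three-vertex thetas that are not $A_n$-bases reduce directly by removing a parallel edge. The delicate bookkeeping is to guarantee that every reduction keeps at least three vertices and stays within the degree bound.

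\textbf{Accounting for the $-1$.} Finally, since $A_n$ introduces only monomials of length $3$, $\L$ preserves length, and $\H$ increases it, the support $T_n$ never meets length $2$; on the multigraph side the only non-separable degree sequence of sum $2n$ on two vertices is $(n,n)$ (the $n$-fold bond), and there is none on a single vertex, which would require a loop. Hence $d_{ns}(n)=|T_n|+1$, and combining this with the no-cancellation step gives that the number of terms in $\RR_n$ equals $|T_n|=d_{ns}(n)-1$, as claimed.
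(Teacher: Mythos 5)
Your overall architecture is the same as the paper's (translate monomials into degree sequences, rule out cancellation, run an induction on the support via the recursion \eqref{f:ind_formulaR}, and account separately for the unique two-part sequence $(n,n)$), and your sign-coherence observation — that the coefficient of a monomial with $\ell$ variables has sign $(-1)^{\ell}$, because $A_n$ is negative in length $3$, $\L$ preserves length and sign, and $\H$ increases length by one while flipping sign — is correct and gives a clean, self-contained proof of no cancellation; the paper instead imports this step from \cite[Lemmas III.13 and III.26]{MPS24}. However, the heart of your argument, the identity $T_{n+1}=S_{n+1}$, is only a plan, and the route you chose (actual multigraph realizations and graph moves) makes it substantially harder than necessary. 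For the forward inclusion, the $\H$-move is a vertex detachment: you must distribute the $\alpha_k$ edges incident to the split vertex between the two new vertices so that non-separability survives, and the existence of such a distribution — your ``in an appropriate realization'' — is exactly the nontrivial content (this is the subject of \cite{JJ}, not a routine check). For the reverse inclusion you explicitly flag the step as unresolved, invoke a minimal-$2$-connectivity fact whose multigraph version needs care, and defer the bookkeeping that keeps $r\geq 3$ and respects the degree bound; as written, this half of the proof is missing.

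The paper sidesteps every one of these difficulties by never touching a graph: it works with Hakimi's arithmetic characterization (Theorem \ref{hakimi_char_nonsep}), defining $\dsng(n)$ as the set of sequences with $r\geq 3$ parts, each at least $2$, summing to $2n$ and satisfying $d_1\leq d_2+\cdots+d_r-2r+4$. With that definition, the inclusion $\TT_{n+1}(\balpha)\subseteq\dsng(n+1)$ is a short inequality manipulation in each of the $\L$ and $\H$ cases, and the reverse inclusion needs only two cases: if every part equals $2$ the predecessor is $(2,\ldots,2)$ of length $n$ reached by $\H$, and otherwise one shows $d_1,d_2\geq 3$ (else the Hakimi inequality fails) and takes the predecessor $(d_1-1,d_2-1,d_3,\ldots,d_r)$ reached by $\L$. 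I would recommend replacing your graph-moves program by this arithmetic reduction; combined with your sign argument, that yields a complete and fully self-contained proof.
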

This result establishes a connection between the structure of these polynomials and the theory of non-separable graphs.

The remainder of the paper is organized as follows. In Section~\ref{s:prolegomena}, we recall some key results on non-separable graphs and introduce notations related to $\RR_n$ necessary for proving Proposition~\ref{main}. The proof of the proposition is then presented in the final section.

\section{Notations and preliminaries}\label{s:prolegomena}
In this section, we give some preliminaries and definitions on graphs, as well as notations for the study of the combinatorics of the coefficients of $R_n$. For more details, we refer the reader to \cite{RST,GYZ2003,Whit1931} for materials on non-separable graphs, and to \cite{MPS24} for materials on the polynomials $(\RR_n)_n$. 

In the rest of the article, for a finite set $E$, we will denote by $|E|$ its cardinal, that is, the number of elements of $E$.
\subsection{Degree sequences of non-separable graphs}\label{sb:graphs}

In this article, we will consider the same kind of  graphs as in \cite{RST}, that is, graphs that are finite, undirected, and do not contain loops (i.e. edges which start and end at the same vertex and are not adjacent to any other vertex). For sake of clarity, we provide a definition below.
\begin{definition}
    A \emph{graph} $G$ is a triple $(V,E,r)$, where
    \begin{itemize}
        \item $V$ is a finite set, called \emph{set of vertices} ;
        \item $E$ is a finite set, called \emph{set of edges} ;
        \item $r$ is an application $E \longrightarrow \bigl \{ \{x,y\}\; ; \; (x,y) \in V^2 \bigr \}$ that assigns to any edge $e \in E$, two endpoints in $V$.
    \end{itemize}
   Furthermore, we assume that $G$ has no loops, that is $\forall e \in E, \; |r(e)| = 2$.
\end{definition}

With that said, the graphs we consider may contain multiple edges; as such, they are sometimes referred to as {\it multigraphs}.

\begin{definition}
    Let $G = (V,E,r)$ be a graph. The \emph{degree} of a vertex $v \in V$ is the integer $d := |\{e \in E \; ; \; v \in r(e) \} |$.
    
    \noindent The \emph{degree sequence} of $G$ is the (finite) sequence formed by arranging the degrees of the vertices of $G$ in a non-decreasing order.
\end{definition}
Let us note that a degree sequence $(d_1, \ldots, d_n)$ can be associated to several different graphs.

In order to define what a non-separable graph is, we need the classical notions of subgraphs, in particular the subgraph $G-v$ of $G$ obtained by removing a vertex and all edges that has it as an endpoint, and the notion of component and connected graph. To keep the exposition short, we refer to \cite{GYZ2003} for these precise definitions, and we define non-separable graphs as follows:

\begin{definition}
Let $G = (V,E,r)$ be a graph. A vertex $v \in V$ is a \emph{cut-vertex} of $G$ if $|E| \geq 2$ and $G-v$ has more components than $G$.

\noindent The graph $G$ is said to be \emph{non-separable} if it is connected and has no cut-vertices.
\end{definition}  

In 1962, Hakimi \cite{HAK} characterized those degree sequences for which there exists a non-separable graph realization.  

\begin{Theorem}
\label{hakimi_char_nonsep}
Let $d_1 \geq d_2 \geq \dots \geq d_n \geq 2$ be integers with $n\geq 2.$  Then there exists a non--separable graph with degree sequence $(d_1, d_2, \dots, d_n)$ if and only if 
\begin{itemize}
\item{} $d_1+d_2+\dots+d_n $ is even and 
\item{} $d_1 \leq d_2+d_3+\dots +d_n -2n+4.$  
\end{itemize}
\end{Theorem}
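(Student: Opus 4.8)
The strategy is to set up a bijection between the monomials of $\RR_n$ and the degree sequences of sum $2n$ admitting a non-separable realization, \emph{minus one} exceptional sequence. To each monomial $X_{\alpha_1}\cdots X_{\alpha_r}$ occurring in $\RR_n$ (with $\alpha_i \geq 2$) I would associate the multiset $\{\alpha_1,\ldots,\alpha_r\}$, reading the exponents as a candidate degree sequence. First I would establish the invariant that every monomial appearing in $\RR_n$ has exponent sum exactly $2n$; this follows by induction on $n$ using~\eqref{f:ind_formulaR}, since one checks directly that $A_n$, $\L$, and $\H$ each preserve the weighted degree in which $X_k$ carries weight $k$ (the operator $\L$ raises two indices by $1$ while deleting nothing, and $\H$ splits an index $\alpha_k$ into $1+l$ and $1+\alpha_k-l$, both operations preserving the total). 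Thus the exponents of any monomial of $\RR_n$ form a sequence of integers $\geq 2$ summing to $2n$.

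Next I would characterize precisely \emph{which} such sequences occur. The target is to show that the support of $\RR_n$, viewed as a set of multisets, is exactly the set of degree sequences $(d_1,\ldots,d_r)$ with $d_i \geq 2$, $\sum d_i = 2n$, satisfying Hakimi's inequality $d_1 \leq d_2+\cdots+d_r - 2r + 4$, \emph{excluding} the single sequence $(n,n)$ (which corresponds to the monomial $X_n^2$ that was separated off into $\widetilde{\RR}_n$). By Theorem~\ref{hakimi_char_nonsep} the sequences satisfying the evenness and inequality conditions are exactly those with a non-separable realization, so this identification immediately yields $|\operatorname{supp}(\RR_n)| = d_{ns}(n) - 1$, proving the proposition. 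The even-sum condition is automatic since the sum is $2n$.

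The heart of the argument is therefore a two-sided containment. For one direction, I would show by induction that no monomial of $\RR_n$ violates Hakimi's inequality: this amounts to checking that $A_n$, $\L$, and $\H$ never produce a sequence whose largest part exceeds the Hakimi threshold, which I expect to verify by tracking how the quantity $d_1 - (d_2+\cdots+d_r) + 2r - 4$ changes under each operator. The reverse containment — that \emph{every} Hakimi-admissible sequence other than $(n,n)$ actually appears with nonzero coefficient — is the genuine obstacle, because the recursion mixes positive and negative contributions and one must rule out cancellation. I would handle this by an induction that simultaneously controls the signs of the coefficients (the examples suggest a predictable sign pattern governed by the number of parts), using the structure of $\L$ and $\H$ to show that a target monomial receives contributions that cannot all cancel; alternatively, one may build each admissible sequence explicitly by a sequence of $\L$- and $\H$-moves from a base case, tracking nonvanishing throughout. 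Controlling this non-cancellation is where the bulk of the careful bookkeeping will lie.
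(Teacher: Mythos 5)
There is a genuine gap: your proposal does not address the statement at all. The statement to be proved is Hakimi's theorem (Theorem \ref{hakimi_char_nonsep}), a purely graph-theoretic characterization of which sequences $d_1 \geq \cdots \geq d_n \geq 2$ arise as degree sequences of non-separable graphs. What you have sketched instead is a proof of the paper's main result, Proposition \ref{main} (the count of monomials of $\RR_n$), and in doing so you explicitly invoke Theorem \ref{hakimi_char_nonsep} as a known fact (``By Theorem \ref{hakimi_char_nonsep} the sequences satisfying the evenness and inequality conditions are exactly those with a non-separable realization''). As a proof of that theorem, the argument is therefore circular: it assumes the very equivalence it is supposed to establish. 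Nothing in your sketch engages with graphs, cut-vertices, or realizability; the operators $\L$, $\H$ and the polynomials $\RR_n$ are irrelevant to the truth of Hakimi's theorem, which predates them by half a century and is quoted by the paper from \cite{HAK} (see also \cite{RST}).

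A genuine proof has two halves. Necessity: the evenness of $d_1+\cdots+d_n$ is the handshake lemma; for the inequality, note that in a non-separable graph $G$ with $n \geq 3$ vertices the vertex $v_1$ of maximum degree is not a cut-vertex, so $G - v_1$ is connected on $n-1$ vertices and hence has at least $n-2$ edges, while the number of edges of $G$ not incident to $v_1$ equals $\tfrac{1}{2}(d_1+\cdots+d_n) - d_1$; combining gives $d_1 \leq d_2+\cdots+d_n - 2n + 4$ (the case $n=2$ is immediate since then the condition reads $d_1 \leq d_2$). Sufficiency is the substantial part: one must \emph{construct} a non-separable realization for every sequence satisfying the two conditions, typically by induction on the degree sum via a reduction that lowers two entries by one (much like Hakimi's original argument, or the bijective/generating-function treatment in \cite{RST}) while verifying that the reduced sequence still satisfies the hypotheses and that the edge added back cannot create a cut-vertex. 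None of this construction, nor any non-separability argument, appears in your proposal.
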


% \noindent It should be noted that Hakimi's Theorem \ref{hakimi_char_nonsep} also appeared in the work of Jackson and Jord\'{a}n \cite[Corollary 3.2]{JJ} as a corollary to a more extensive theorem.  

In \cite{RST}, R\o dseth, Tverberg, and Sellers considered the function $d_{ns}(n)$ which counts the number of degree sequences of degree sum $2n$ with a non--separable graph realization.  Using both generating functions as well as bijective arguments, the authors of \cite{RST} proved the following:    

\begin{Theorem}
\label{new_nonsep}
For all $m\geq 2,$ 
\begin{align}
d_{ns}(2m) = p(2m) - p(2m-1) - \sum_{j=0}^{m-2} p(j),
\end{align}
where $p(k)$ is the number of unrestricted integer partitions of $k$.  
\end{Theorem}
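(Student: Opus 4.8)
The plan is to reduce the statement to a clean partition-counting problem via Hakimi's criterion, and then to settle that problem with an explicit bijection. By Theorem~\ref{hakimi_char_nonsep}, $d_{ns}(2m)$ counts the integer sequences $d_1 \ge d_2 \ge \dots \ge d_n \ge 2$ with $n \ge 2$, degree sum $d_1 + \dots + d_n = 2m$, and $d_1 \le d_2 + \dots + d_n - 2n + 4$ (the parity condition being automatic here). Since the degree sum is $2m$, this last inequality is equivalent to $d_1 + n \le m + 2$. Thus $d_{ns}(2m)$ equals the number of partitions $\lambda = (d_1, \dots, d_n)$ of $2m$ into parts $\ge 2$, with at least two parts, whose largest part $d_1$ and number of parts $n$ satisfy $d_1 + n \le m + 2$.

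First I would count the partitions of $2m$ into parts $\ge 2$, ignoring the two extra constraints. Deleting one part equal to $1$ gives a bijection between the partitions of $2m$ having at least one part equal to $1$ and all partitions of $2m - 1$, so the number of partitions of $2m$ with no part equal to $1$ is $p(2m) - p(2m-1)$, which is the first half of the claimed formula. It then remains to show that, among partitions of $2m$ into parts $\ge 2$, exactly $\sum_{j=0}^{m-2} p(j)$ of them fail to be non-separable, i.e. violate either ``$n \ge 2$'' or ``$d_1 + n \le m + 2$''. The only partition with a single part is $(2m)$, for which $d_1 + n = 2m + 1 \ge m + 3$ whenever $m \ge 2$; hence the excluded partitions are exactly those partitions $\lambda$ of $2m$ into parts $\ge 2$ with $d_1 + n \ge m + 3$, the single-part case being absorbed into this one inequality.

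The heart of the argument is a bijection between this excluded set and $\bigsqcup_{j=0}^{m-2}\{\text{partitions of } j\}$, whose cardinality is $\sum_{j=0}^{m-2} p(j)$. To a partition $\lambda$ I associate the partition $\mu$ obtained by deleting the first row and the first column of its Young diagram, i.e. $\mu = (d_2 - 1, d_3 - 1, \dots, d_n - 1)$; since all $d_i \ge 2$ this is a genuine partition with $n - 1$ parts, and $|\mu| = 2m - (d_1 + n - 1) \le m - 2$ precisely because $d_1 + n \ge m + 3$. Conversely, from a partition $\mu$ of $j \le m - 2$ with $r$ parts one recovers $\lambda$ by setting $n = r + 1$, $d_1 = 2m - j - r$, and $d_{i+1} = \mu_i + 1$; the total is then $2m$ and $d_1 + n = 2m - j + 1 \ge m + 3$ by construction. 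The point that genuinely requires verification --- and which I expect to be the main obstacle, since it is the only place the bijection could break down --- is that this reconstructed sequence is weakly decreasing, i.e. $d_1 \ge d_2$, equivalently $2m - j - r \ge \mu_1 + 1$. I would deduce this from $\mu_1 \le j - r + 1$ (the remaining $r - 1$ parts are each $\ge 1$) together with $j \le m - 2$, which gives $2m - j - r \ge j - r + 2 \ge \mu_1 + 1$. The two maps are then mutually inverse, the excluded set has cardinality $\sum_{j=0}^{m-2} p(j)$, and subtracting from $p(2m) - p(2m-1)$ yields the stated identity $d_{ns}(2m) = p(2m) - p(2m-1) - \sum_{j=0}^{m-2} p(j)$.
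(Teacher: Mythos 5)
Your proof is correct, but note that the paper itself contains no proof of this statement: Theorem~\ref{new_nonsep} is imported verbatim from R{\o}dseth, Sellers and Tverberg \cite{RST}, whose argument the paper only describes as combining generating functions with bijective reasoning. Your argument is therefore a self-contained alternative, resting solely on Hakimi's criterion (Theorem~\ref{hakimi_char_nonsep}, which the paper does state). The individual steps all check out: under $d_1+\cdots+d_n=2m$ the Hakimi inequality is indeed equivalent to $d_1+n\le m+2$; the count $p(2m)-p(2m-1)$ of partitions of $2m$ with no part equal to $1$ is the standard delete-a-one bijection; the single-part partition $(2m)$ is correctly absorbed into the condition $d_1+n\ge m+3$ (using $m\ge 2$); and your first-row-plus-first-column deletion is a genuine bijection onto $\bigsqcup_{j=0}^{m-2}\{\text{partitions of }j\}$, the one delicate point being weak monotonicity of the reconstructed sequence, which your chain $2m-j-r\ge j-r+2\ge \mu_1+1$ (valid since $j\le m-2$) settles. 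The only step you leave implicit --- which the paper also leaves implicit when it identifies $\dsng(n)$ with degree sequences of non-separable graphs --- is that for degree sum $2m\ge 4$ no sequence with a part $\le 1$, or with a single vertex, can have a non-separable realization (a degree-$1$ vertex forces its neighbour to be a cut-vertex once $|E|\ge 2$); this is what lets you equate $d_{ns}(2m)$ with the number of partitions satisfying Hakimi's conditions. What your route buys, compared with the cited one, is a short purely combinatorial proof in which the subtracted term $\sum_{j=0}^{m-2}p(j)$ acquires a transparent meaning: it enumerates exactly the Hakimi-violating $1$-free partitions via row-and-column deletion, rather than emerging from a generating-function identity.
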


\noindent 
So, for example, the number of degree sequences of sum 6 with non--separable graph realizations is 
\[d_{ns}(6) = p(6) - p(5) - p(0) - p(1) = 11 - 7 -1-1 = 2.\] 
The two partitions in question, along with corresponding non-separable graph realizations, are shown below in Figure \ref{part3}.
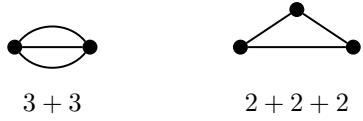
\begin{figure}[!htbp]
\centering
\begin{tikzpicture}[
  vertex/.style={circle,fill=black,inner sep=2pt},
  edge/.style={-Latex}
]
	
	% ----- Bloc "3+3"
	% points
	\node[vertex] (a) at (0,0)  {};
    \node[vertex] (b) at (1,0)  {};

    % arêtes
    \draw[line width=0.75pt, line cap=round] (a)--(b) ;

    \draw[line width=0.75pt, line cap=round] (a) to[bend left=50] (b);
    \draw[line width=0.75pt, line cap=round] (a) to[bend right=50] (b);
	\node at (0.5,-0.75) {$3+3$};
	
	% ----- Bloc "2+2+2"
	% points
	\node[vertex] (a) at (3,0)  {};
    \node[vertex] (b) at (3.75,0.5)  {};
    \node[vertex] (c) at (4.5,0)  {};
    % arêtes 
    \draw[line width=0.75pt, line cap=round] (a)--(b) (b)--(c) (c)--(a);

	\node at (3.75,-0.75) {$2+2+2$};
	
\end{tikzpicture}
\caption{Non-separable graphs realizations of degree sequences of degree sum $6$}
\label{part3}
\end{figure}

One last example: the number of degree sequences of sum $8$ with non-separable graph realizations is
\[d_{ns}(8) = p(8) - p(7) - p(0) - p(1) - p(2) = 22 - 15 -1-1-2 = 3,\]
and the corresponding non-separable graph realizations are shown below in Figure \ref{part4}.
\vspace{2mm}
\begin{figure}[!htbp]
\centering
\begin{tikzpicture}[
  vertex/.style={circle,fill=black,inner sep=2pt},
  edge/.style={-Latex}
]
	
	% ----- Bloc "4+4"
	% points
	\node[vertex] (a) at (0,0)  {};
    \node[vertex] (b) at (1,0)  {};

    % arêtes

    \draw[line width=0.75pt, line cap=round] (a) to[bend left=50] (b);
    \draw[line width=0.75pt, line cap=round] (a) to[bend right=50] (b);
    \draw[line width=0.75pt, line cap=round] (a) to[bend left=20] (b);
    \draw[line width=0.75pt, line cap=round] (a) to[bend right=20] (b);
	\node at (0.5,-0.75) {$4+4$};
	
	% ----- Bloc "3+3+2"
	% points
	\node[vertex] (a) at (3,0)  {};
    \node[vertex] (b) at (3.5,0.75)  {};
    \node[vertex] (c) at (4,0)  {};
    % arêtes 
    \draw[line width=0.75pt, line cap=round] (a)--(b) (b)--(c);
    \draw[line width=0.75pt, line cap=round] (a) to[bend left=20] (c);
    \draw[line width=0.75pt, line cap=round] (a) to[bend right=20] (c);

	\node at (3.5,-0.75) {$3+3+2$};

    % ----- Bloc "2+2+2+2"
	% points
	\node[vertex] (a) at (6,0)  {};
    \node[vertex] (b) at (6,1)  {};
    \node[vertex] (c) at (7,0)  {};
    \node[vertex] (d) at (7,1)  {};
    % arêtes 
    \draw[line width=0.75pt, line cap=round] (a)--(b) (b)--(d) (d)--(c) (a)--(c);

	\node at (6.5,-0.75) {$2+2+2+2$};
	
\end{tikzpicture}
\caption{Non-separable graphs realizations of degree sequences of degree sum $8$}
\label{part4}
\end{figure}
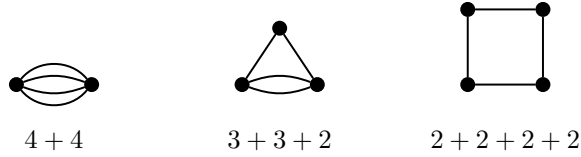

\subsection{Structure of the coefficients of $\RR_n$}
Let us fix an integer $n\geq 3$. Recall the expression of $\RR_n$
\begin{align}\label{eq:Rn}
\RR_n =  \underset{\balpha \in I_{n}}{\sum} c_{\boldsymbol{\alpha}}^{(n)} X_{\balpha}
\end{align}
where $I_n$ is the set of sequences of integers $\balpha = (\alpha_1,\dots, \alpha_r)$ with $r\geq 3$ such that  $\sum_{k=1}^r \alpha_k = 2n$, $\forall k \in \{1,\ldots,r\}, \; 2\leq \alpha_k\leq n-1$ and $\alpha_1 \geq \cdots \geq \alpha_r\geq 2$. Since we only want to deal with coefficents $c_{\boldsymbol{\alpha}}^{(n)}$ that are not zero, we define $I_n^*$ as the set of elements $\balpha$ of $I_n$ such that $c_{\balpha}^{(n)} \neq 0$, so that 
\begin{align}\label{eq:form_rn}
\RR_n =  \underset{\balpha \in I^*_{n}}{\sum} c_{\boldsymbol{\alpha}}^{(n)} X_{\balpha}
\end{align}

Given an element $\balpha$ of $I_n^*$, we denote by $\LL_{n+1}(\balpha)$ (resp. $\HH_{n+1}(\balpha)$) the set of sequences $\bbeta \in I^*_{n+1}$ such that the coefficient of the monomial $X_{\bbeta}$ in $\L(X_{\balpha})$ (resp. $\H(X_{\balpha})$) is non zero. Furthermore, we denote by $\TT_{n+1}(\balpha)$ the union $\LL_{n+1}(\balpha) \cup \HH_{n+1}(\balpha)$. Let us note that, by virtue of \cite[Lemma III.13]{MPS24}, the latter union is disjoint. 

By analysing the proof of \cite[Lemma III.26]{MPS24}, we have the following result:
\begin{lemma}
\begin{align}\label{eq:IT}
I_{n+1}^* & = \underset{\balpha \in I_n^*}{\bigcup} \TT_{n+1}(\balpha).
\end{align}
\end{lemma}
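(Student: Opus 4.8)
The plan is to prove the equality by the two inclusions, the inclusion $\bigcup_{\balpha\in I_n^*}\TT_{n+1}(\balpha)\subseteq I_{n+1}^*$ being immediate: each $\LL_{n+1}(\balpha)$ and each $\HH_{n+1}(\balpha)$ is by its very definition a subset of $I_{n+1}^*$, hence so is $\TT_{n+1}(\balpha)$ and the union over $\balpha\in I_n^*$. All the content lies in the reverse inclusion, that is, in showing that no monomial of $\RR_{n+1}$ escapes the images of the monomials of $\RR_n$ under $\L$ and $\H$.

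The main device I would set up first is a no-cancellation principle coming from a sign rule: for every $m\geq 3$, the coefficient in $\RR_m$ of a monomial $X_{\bbeta}$ indexed by a sequence $\bbeta$ of length $r$ has sign $(-1)^r$ (with the convention that an absent monomial has coefficient $0$). This is proved by induction on $m$ from the recursion~\eqref{f:ind_formulaR}, using that $\L$ preserves the length and has nonnegative coefficients, that $\H$ raises the length from $r$ to $r+1$ and carries the overall sign of $-\frac{1}{2}$, and that the term $A_{m-1}$ contributes only to length-$3$ monomials, all with negative coefficients. Thus every contribution to a given length-$r$ monomial of $\RR_{n+1}$ shares the sign $(-1)^r$, so no cancellation can occur. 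Consequently $I_{n+1}^*$ is exactly the union of the supports of $A_n$, $\L(\RR_n)$ and $\H(\RR_n)$, whereas $\bigcup_{\balpha\in I_n^*}\TT_{n+1}(\balpha)$ is exactly the union of the supports of $\L(\RR_n)$ and $\H(\RR_n)$ alone. Since $A_n$ is supported on length-$3$ sequences while $\H(\RR_n)$ is supported on sequences of length at least $4$, the reverse inclusion reduces to the single claim that every monomial of $A_n$ already occurs in $\L(\RR_n)$.

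To prove this last claim, I would note that the monomials of $A_n$ are exactly the sorted triples $(n,b,c)$ with $b+c=n+2$ and $2\le c\le b\le n$, and that each such triple is the image under $\L$ of the triple $(n-1,b-1,c)$ (reordered if necessary), obtained by raising its two largest entries. This preimage has degree sum $2n$ and entries between $2$ and $n-1$, so it is an admissible index for $\RR_n$; the point that must still be secured — and this is the main obstacle — is that it genuinely belongs to $I_n^*$, that is, that its coefficient does not vanish. I would obtain this from an auxiliary induction describing the length-$3$ part of the support: the length-$3$ monomials of $\RR_m$ are precisely all admissible triples of degree sum $2m$ (necessarily with largest entry at most $m-1$). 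The inductive step is a covering argument mirroring the preimage construction above — the monomials of $A_{m-1}$ realize exactly the triples with largest entry equal to $m-1$, while $\L$ applied to the admissible triples of $\RR_{m-1}$ realizes the remaining ones (those with largest entry at most $m-2$) — and it invokes the no-cancellation principle to guarantee that none of these monomials is annihilated. Granting this characterization at level $n$, the triple $(n-1,b-1,c)$ lies in $I_n^*$, which completes the reduction and hence the proof.
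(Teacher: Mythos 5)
Your argument is sound, and it supplies something the paper itself does not: the paper states this lemma without proof, attributing it to an analysis of Lemma III.26 of the reference \cite{MPS24}, together with the one-line gloss that the content of the equality is precisely that $A_n$ brings no monomials to $\RR_{n+1}$ beyond those of $\L(\RR_n)$ and $\H(\RR_n)$. You have correctly isolated exactly that point as the crux. The inclusion $\bigcup_{\balpha}\TT_{n+1}(\balpha)\subseteq I_{n+1}^*$ is indeed definitional, since $\LL_{n+1}(\balpha)$ and $\HH_{n+1}(\balpha)$ are by construction subsets of $I_{n+1}^*$. Your sign rule --- the coefficient of a length-$r$ monomial of $\RR_m$ has sign $(-1)^r$ --- does propagate through the recursion, because $A_{m-1}$ is negative and supported on length-$3$ monomials, $\L$ preserves length with nonnegative coefficients, and $\H$ increases length by one while reversing the sign; it is also visible on the displayed examples $\RR_3,\dots,\RR_6$. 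This legitimately converts supports of sums into unions of supports and reduces the lemma to $\operatorname{supp}(A_n)\subseteq\operatorname{supp}(\L(\RR_n))$. Your characterization of the length-$3$ support of $\RR_m$ as all sorted triples of sum $2m$ with entries in $[2,m-1]$ matches Hakimi's condition specialized to triples (where it reads $d_1\leq m-1$), and the covering induction goes through; the only arithmetic points worth writing out are that $b\geq 3$ in the preimage $(n-1,b-1,c)$ of a monomial $(n,b,c)$ of $A_n$ (true since $b\geq (n+2)/2$ and $n\geq 3$), and, in the auxiliary induction, that a triple with largest entry at most $m-2$ has second entry at least $3$ so that lowering the two largest entries stays admissible. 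In short, your route is a correct, self-contained replacement for the paper's citation; its cost is the extra bookkeeping of the sign rule and of the degree-$3$ part of the support, which the paper avoids by outsourcing the statement.
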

Equality \eqref{eq:IT}states that the polynomial $A_{n+1}$ does not bring more monomials to $\RR_{n+1}$ than the polynomials $\L(\RR_n)$ and $\H(\RR_n)$ .

\begin{definition}
Define $\dsng(n)$ as the set of finite sequences ${\bf d} = (d_1, \ldots,d_r)$ with $r\geq 3$, such that:
\begin{enumerate}[(i)]
\item $d_1 \geq \cdots \geq d_r \geq 2$ ;
    \item $\underset{k=1}{\overset{r}{\sum}} d_k = 2n$ ;
    \item $d_1 \leq d_2 + \cdots + d_r - 2r +4$.
\end{enumerate}
\end{definition}
By Hakimi's Theorem \ref{hakimi_char_nonsep}, the elements $(d_1,\ldots, d_r)$ of $\dsng(n)$ are degree sequence of degree sum $2n$ with a non-separable graph realization. Since the only degree sequence $(d_1,d_2)$ of sum $2n$ is $(n,n)$ (using Theorem \ref{hakimi_char_nonsep}), we deduce that \[|\dsng(n)| = d_{ns}(n)-1.\]

As a consequence of the previously introduced notations, we now formulate below the exact result we are proving in this paper, which implies Proposition \ref{main}.
\begin{proposition}\label{mainbis}
    Let $n$ be an integer greater than $2$. Then $I_n^* = \dsng(n)$.
\end{proposition}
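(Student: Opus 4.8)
The plan is to prove the set equality $I_n^* = \dsng(n)$ by induction on $n$, leveraging the recursive structure of $\RR_n$ encoded in Equation~\eqref{eq:IT} together with Hakimi's characterization (Theorem~\ref{hakimi_char_nonsep}). For the base case, one checks directly from the explicit expression $\RR_3 = -2X_2^3$ that $I_3^* = \{(2,2,2)\}$, and that $\dsng(3)$ consists of exactly this sequence, since $(2,2,2)$ is the unique sequence of length $r\geq 3$ summing to $6$ with all parts $\geq 2$ and satisfying condition (iii): $2 \leq 2+2-6+4 = 2$. With the base case settled, the induction hypothesis is $I_n^* = \dsng(n)$, and the goal is to deduce $I_{n+1}^* = \dsng(n+1)$.

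For the inductive step I would argue by double inclusion. By Equation~\eqref{eq:IT}, $I_{n+1}^* = \bigcup_{\balpha \in I_n^*} \TT_{n+1}(\balpha)$, so every element of $I_{n+1}^*$ arises by applying the operators $\L$ or $\H$ to some $\balpha \in I_n^* = \dsng(n)$. The inclusion $I_{n+1}^* \subseteq \dsng(n+1)$ then reduces to verifying that each $\bbeta \in \TT_{n+1}(\balpha)$ satisfies the defining conditions (i)--(iii) of $\dsng(n+1)$, given that $\balpha$ satisfies them. Condition (ii) is automatic from the degree bookkeeping: both $\L$ and $\H$ send a monomial of total degree $2n$ (in the sense $\sum \alpha_k = 2n$) to monomials with $\sum \beta_k = 2n+2 = 2(n+1)$, which is immediate from inspecting \eqref{def:L} and \eqref{def:H}. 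Conditions (i) and the Hakimi inequality (iii) require tracking how $\L$ (which increments two exponents by $1$ each, keeping the number of parts fixed) and $\H$ (which splits one part $\alpha_k$ into two parts $1+l$ and $1+\alpha_k-l$, increasing the number of parts by one) transform the quantity $d_1 - (d_2+\cdots+d_r) + 2r - 4$; I would show this quantity stays $\leq 0$ under both operations.

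For the reverse inclusion $\dsng(n+1) \subseteq I_{n+1}^*$, I would take an arbitrary $\bbeta = (d_1,\ldots,d_s) \in \dsng(n+1)$ and construct a preimage $\balpha \in \dsng(n) = I_n^*$ together with an operator ($\L$ or $\H$) mapping $\balpha$ to a monomial containing $X_{\bbeta}$ with nonzero coefficient. Concretely, one inverts the two operations: either decrease two of the parts of $\bbeta$ by $1$ (inverting $\L$), or merge two parts of $\bbeta$ into a single part (inverting $\H$). The key is to show that at least one such inverse move lands in $\dsng(n)$, i.e. produces a sequence of length $\geq 3$ with all parts $\geq 2$, summing to $2n$, and satisfying the Hakimi inequality. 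A delicate sub-case is when $\bbeta$ has parts equal to $2$ (which cannot be decremented without violating condition (i)) or has only three parts (where merging might drop the length below $3$); here one must argue that an appropriate alternative move remains available within $\dsng(n+1)$'s constraints.

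I expect the main obstacle to be the reverse inclusion, and specifically the verification that the inverse move can always be chosen to preserve the Hakimi inequality (iii) while keeping all parts $\geq 2$ and the length $\geq 3$. Since \eqref{eq:IT} only asserts that $I_{n+1}^*$ is \emph{covered} by the sets $\TT_{n+1}(\balpha)$, the forward inclusion is comparatively mechanical, but the backward direction demands exhibiting an explicit and valid preimage for every boundary-case degree sequence in $\dsng(n+1)$. I anticipate a careful case analysis splitting on whether $\bbeta$ has a part strictly larger than $2$ (allowing an $\L$-preimage via decrementing) versus $\bbeta$ consisting entirely of $2$'s (forcing an $\H$-preimage via merging two $2$'s into a $4$, then checking that the resulting sequence of length $s-1 \geq 3$ still meets Hakimi's bound). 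Ensuring these constructed preimages actually lie in $\dsng(n)$ — rather than merely being valid integer sequences — is the crux that ties the combinatorial inversion back to the inductive hypothesis.
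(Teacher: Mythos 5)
Your plan is essentially the paper's proof: induction via the recursion \eqref{eq:IT}, with the forward inclusion $I_{n+1}^*\subseteq\dsng(n+1)$ obtained by checking that $\L$ and $\H$ preserve Hakimi's inequality (the paper's Lemma 3.3), and the reverse inclusion obtained by exhibiting an explicit preimage in $\dsng(n)$, split exactly as you propose into the all-$2$'s case (handled by $\H$) and the case of a part $\geq 3$ (handled by $\L$ after noting that the Hakimi inequality forces $d_1,d_2\geq 3$, so the two \emph{largest} parts can be decremented) — this is the paper's Lemma 3.1.

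One concrete step of your outline would fail as stated: in the all-$2$'s case you propose ``merging two $2$'s into a $4$.'' But $\H$ replaces a single factor $X_{\alpha_k}$ by $X_{1+l}X_{1+\alpha_k-l}$, so it \emph{raises} the index sum by $2$; inverting it merges parts $a$ and $b$ into $a+b-2$, not $a+b$. Merging two $2$'s into a $4$ produces a sequence of sum $2(n+1)$, which cannot lie in $\dsng(n)$. The correct move merges two $2$'s into a single $2$: the preimage of $(2,\dots,2)$ with $n+1$ parts is $(2,\dots,2)$ with $n$ parts, and indeed $\H(X_2^n)=-nX_2^{n+1}$. This is a local arithmetic slip rather than a wrong strategy — the repaired move is exactly the paper's — but as written your crux case does not land in $\dsng(n)$ and the induction would not close.
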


As an illustration, for $n=3$ one has $\dsng(3) = \bigl \{ (2,2,2) \bigr  \}$ and $\RR_3 = -2X_2^3$. The degree sequence $(2,2,2)$ corresponds to the monomial $X_2^3$ (see also Figure \ref{part3} for an associated non-separable graph realization). For $n=4$, one has $\dsng(4) = \bigl \{ (3,3,2), (2,2,2,2) \bigr  \}$ and $\RR_4 = -12X_2X_3^2+6X_2^4$. The degree sequence $(3,3,2)$ (resp. $(2,2,2,2)$) corresponds to the monomial $X_2X_3^2$ (resp. $X_2^4$). Non-separable graph realizations can also be seen in Figure \ref{part4}.

\medskip

For $d_1, \ldots,d_r$ positive integers, we will denote by $\{\{ d_{1}, \ldots,d_r\}\}$ the ordered sequence $(d_{\sigma(1)}, \ldots,d_{\sigma(r)})$ where $\sigma$ is a permutation of $\{1,\ldots,r\}$ such that $d_{\sigma(1)} \geq \cdots \geq d_{\sigma(r)}$.

\section{Proof of Proposition \ref{main}}
Let $n$ be an integer greater than $1$. To prove Proposition \ref{mainbis}, we show that $\dsng(n+1)$ has the same induction relation than $I_{n+1}^*$: equality \eqref{eq:IT}, that is
\begin{align}\label{eq:final}
    \dsng(n+1) & = \underset{\balpha\in \dsng(n)}{\bigcup} \TT_{n+1}(\balpha)
\end{align}
From there, since $I_2^* = \dsng(2)$, equalities \eqref{eq:final} and \eqref{eq:IT} combined with a straightforward induction yield
\begin{align*}
    I_{n+1}^* = \dsng(n+1).
\end{align*}
Thus, we divide the proof in two lemmas.
\begin{lemma}\label{proof1}
    For every sequence $\mathbf{d} \in \dsng(n+1)$, there exists a sequence $\balpha \in \dsng(n)$ such that $\mathbf{d} \in \TT_{n+1}(\balpha)$.
\end{lemma}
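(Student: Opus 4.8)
The plan is to recast membership in $\TT_{n+1}(\balpha)$ as a combinatorial move on degree sequences. A monomial occurs in $\L(X_{\balpha})$ (resp. in $\H(X_{\balpha})$) exactly when its exponent sequence is obtained from $\balpha$ by adding $1$ to two of its parts (resp. by replacing one part $\gamma$ by two parts $\geq 2$ summing to $\gamma+2$). So the two corresponding \emph{backward} moves on $\mathbf d=(d_1\geq\dots\geq d_r)$ are an $\L$-move, lowering two parts by $1$ (each staying $\geq 2$) and preserving the number of parts, and an $\H$-move, merging two parts $a,b$ into a single part $a+b-2$ and dropping the number of parts by $1$. It therefore suffices to exhibit, for each $\mathbf d\in\dsng(n+1)$, a sequence $\balpha\in\dsng(n)$ and one backward move sending $\mathbf d$ to $\balpha$: indeed $\L$ contributes only positive and $\H$ only negative coefficients, and by the sign pattern of the coefficients of $\RR_{n+1}$ recorded in \cite{MPS24} all monomials hitting $X_{\mathbf d}$ share one sign, so no cancellation occurs; obtainability of $\mathbf d$ from some $\balpha\in\dsng(n)=I_n^*$ then forces $\mathbf d\in I^*_{n+1}$, whence $\mathbf d\in\TT_{n+1}(\balpha)$. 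I argue for $n\geq 3$, the value $n=2$ (i.e. $\mathbf d=(2,2,2)\in\dsng(3)$) being the base of the induction, checked directly.

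The whole argument is driven by one quantity: write $f(\mathbf c)$ for the sum of the largest part of $\mathbf c$ and the number of its parts. Rewriting condition (iii) as $2\,f(\mathbf c)\leq(\text{degree sum})+4$ shows that a sorted sequence with parts $\geq 2$, at least three parts and degree sum $2k$ lies in $\dsng(k)$ if and only if $f(\mathbf c)\leq k+2$. Thus $\mathbf d\in\dsng(n+1)$ means $f(\mathbf d)\leq n+3$, and I must produce $\balpha$ with sum $2n$, parts $\geq 2$, at least three parts, and $f(\balpha)\leq n+2$. Both backward moves lower the sum by $2$ and keep parts $\geq 2$; an $\L$-move fixes the number of parts and lowers $f$ by exactly the drop of the maximum, while an $\H$-move lowers the number of parts by $1$ and resets the maximum to $\max(d_1,a+b-2)$. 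I now split on $f(\mathbf d)$.

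If $f(\mathbf d)\leq n+2$, a short count shows $\mathbf d$ has at least two parts $\geq 3$ (no part $\geq 3$ forces all parts $=2$, hence $f(\mathbf d)=n+3$; exactly one part $\geq 3$ contradicts the sum $2n+2$ together with $f(\mathbf d)\leq n+2$), and lowering two of them by $1$ is a legal $\L$-move that cannot raise the maximum, so $f(\balpha)\leq f(\mathbf d)\leq n+2$. If $f(\mathbf d)=n+3$, then (iii) is tight and I must strictly lower $f$; let $m$ be the multiplicity of the maximum $d_1$. When $m\leq 2$ one checks $d_1\geq 3$, and an $\L$-move kills the maximum: for $m=2$ I lower the two maximal parts, and for $m=1$ I lower $d_1$ together with a second part $\geq 3$ (such a part exists, for otherwise the sum and $f(\mathbf d)=n+3$ force $d_1=2$). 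In both cases the maximum drops by exactly $1$, giving $f(\balpha)=n+2$.

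The main obstacle is the remaining case $f(\mathbf d)=n+3$ with $m\geq 3$: here an $\L$-move cannot reduce the maximum, so I am forced to use an $\H$-move, and I must establish that (a) the smallest part of $\mathbf d$ equals $2$ and (b) $\mathbf d$ has at least four parts. Granting these, merging a copy of $d_1$ with a part $2$ produces the new part $d_1+2-2=d_1$, leaves the maximum equal to $d_1$, drops the number of parts by $1$, and lands in $\dsng(n)$ with $f=n+2$. For (a) I argue by contradiction: if every part is $\geq 3$, then, using $2n+2=2\,f(\mathbf d)-4$ and the lower bound $m\,d_1+3(r-m)$ for the degree sum, I get $d_1(2-m)\geq r-3m+4$, and since $m\geq 3$ and $r\geq m$ this forces $d_1\leq 2$, contradicting $d_1\geq 3$. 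Point (b) follows at once: when $d_1\geq 3$ the part $2$ differs from the maximum, so $r>m\geq 3$; when $d_1=2$ all parts are $2$ and $f(\mathbf d)=n+3$ gives $r=n+1\geq 4$. Assembling the three cases yields a predecessor $\balpha\in\dsng(n)$ with $\mathbf d\in\TT_{n+1}(\balpha)$ for every $\mathbf d\in\dsng(n+1)$, which is the assertion of Lemma \ref{proof1}.
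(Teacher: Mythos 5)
Your proof is correct, and at its core it uses the same construction as the paper: exhibit an explicit one-step predecessor $\balpha$, obtained by decrementing two parts that are at least $3$ (an $\L$-move) in the generic case, and by deleting a part equal to $2$ via the merge $d_1+2-2=d_1$ (an $\H$-move) otherwise. The difference is in the bookkeeping. The paper splits only into ``all parts equal $2$'' versus ``some part exceeds $2$'', always decrementing $d_1$ and $d_2$ in the latter case, and then verifies condition (iii) for the part $d_1-1$; that verification is the relevant one only when $d_1-1$ is still the largest part of $\balpha$, which fails when $d_1=d_2=d_3$, so a further (true but unstated) estimate is silently needed there. Your case split, driven by $f(\mathbf d)=d_1+r$ and the multiplicity of the maximum, tracks exactly how much the maximum drops and therefore closes that gap explicitly; your observation that $f=n+3$ together with three or more parts $\geq 3$ at the maximum is impossible is precisely the missing estimate. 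You are also right to restrict to $n\geq 3$: since $\dsng(2)=\emptyset$ while $\dsng(3)=\{(2,2,2)\}$, the lemma as literally stated fails at $n=2$ and the equality $I_3^*=\dsng(3)$ must be taken as the base case, a point the paper glosses over. The one external input you use --- the no-cancellation fact needed to pass from ``$X_{\mathbf d}$ occurs in $\L(X_{\balpha})$ or $\H(X_{\balpha})$'' to ``$\mathbf d\in\TT_{n+1}(\balpha)$'', whose definition requires $\mathbf d\in I^*_{n+1}$ --- is the same fact from \cite{MPS24} that the paper packages into equality \eqref{eq:IT}, so your argument is no less self-contained than the original.
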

\begin{proof}
Let ${\bf d} = (d_1,\ldots,d_r)$ be a degree sequence lying in $\dsng(n+1)$.

If $d_1 = \ldots = d_r = 2$ then since 
\begin{align*}
    2r =\sum_{k=1}^r d_k  = 2(n+1)
\end{align*}
one has $r=n+1$, and  
\begin{align*}
2 \leq 2 \times(r-1) - 2r+4 = 2. 
\end{align*}
hence such a $\bf d$ belongs to $\dsng(n+1)$. Then, $\boldsymbol{\alpha} = (2,\ldots, 2) \in \N^{n}$ verifies ${\bf d} \in \TT_{n+1}(\boldsymbol{\alpha})$ since
\begin{align*}
    \H(X_{\boldsymbol{\alpha}}) = \H(X_2^n) = -n X_2^{n+1} = -n X_{{\bf d}}
\end{align*}
and, what's more, $\balpha \in \dsng(n)$.

Suppose that there exists $k \in \{1,\ldots,r\}$ such that $d_k >2$. Then $d_1 \geq 3$ and $d_2 \geq 3$. Indeed, since $d_1\geq d_k$, we get $d_1 \geq 3$. Finally, one cannot have $d_2 = 2$, since otherwise $d_3 = \cdots = d_r = 2$ and 
\[d_1> 2 = d_2 + \cdots+ d_r -2r+4.\]
Hence $d_2\geq 3$.

Define $\boldsymbol{\alpha} := (d_1-1, d_2-1, d_3, \ldots, d_r)$. Then $\boldsymbol{\alpha}$ is in $\dsng(n)$. Indeed, since $\bf d$ belongs to $\dsng(n+1)$, one has 
\begin{align*}
    &d_1 + d_2 +\sum_{k=3}^r d_k  = 2n+2 ,\\
    &d_1 \leq d_2 + \cdots +d_r -2r+4,
\end{align*}
hence
\begin{align*}
    &d_1-1 + d_2-1 +\sum_{k=3}^r d_k  = 2n ,\\
    &d_1-1 \leq d_2-1+d_3 + \cdots +d_r -2r+4.
\end{align*}
Finally, ${\bf d}$ belongs to $\TT_{n+1}(\boldsymbol{\alpha})$, since 
\begin{align*}
    \L(X_{\boldsymbol{\alpha}}) & = \underset{1\leq i<j \leq r}{\sum} X_{\alpha_1} \cdots X_{\alpha_{i+1}} \cdots X_{\alpha_{j+1}}\cdots X_{\alpha_{r}} \\
    & = X_{\bf d} + \underset{2\leq i<j \leq r}{\sum} X_{\alpha_1} \cdots X_{\alpha_{i+1}} \cdots X_{\alpha_{j+1}}\cdots X_{\alpha_{r}}.
\end{align*}
That concludes.
\end{proof}
From Lemma \ref{proof1} above, we infer a first inclusion:
\begin{align}\label{eq:inclu1}
    \dsng(n+1) \subseteq \underset{\balpha \in \dsng(n)}{\bigcup}\TT_{n+1}(\balpha).
\end{align}
We will now move on to the second lemma.
\begin{lemma}\label{proof2}
    Let $\balpha$ be an element of $\dsng(n)$. Then $\TT_{n+1}(\balpha) \subseteq \dsng(n+1)$.
\end{lemma}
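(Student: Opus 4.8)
The plan is to prove the reverse inclusion to \eqref{eq:inclu1}, namely that every sequence produced by applying $\LL_{n+1}$ or $\HH_{n+1}$ to an element $\balpha \in \dsng(n)$ lands in $\dsng(n+1)$. Since $\TT_{n+1}(\balpha) = \LL_{n+1}(\balpha) \cup \HH_{n+1}(\balpha)$, I would split the argument into two cases according to which operator generated the sequence $\bf d$, and in each case verify the three defining conditions of $\dsng(n+1)$: non-increasing order with entries $\geq 2$ (condition (i)), degree sum equal to $2(n+1)$ (condition (ii)), and the Hakimi inequality $d_1 \leq d_2 + \cdots + d_r - 2r + 4$ (condition (iii)). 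Let me write $\balpha = (\alpha_1, \ldots, \alpha_r) \in \dsng(n)$ throughout, so $\sum_k \alpha_k = 2n$, $\alpha_1 \geq \cdots \geq \alpha_r \geq 2$, and $\alpha_1 \leq \alpha_2 + \cdots + \alpha_r - 2r + 4$.

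For the $\LL_{n+1}$ case, recall from \eqref{def:L} that a monomial of $\L(X_\balpha)$ is obtained by choosing indices $i < j$ and incrementing both $\alpha_i$ and $\alpha_j$ by $1$; thus $\bf d$ is the reordering $\{\{\alpha_1, \ldots, \alpha_i+1, \ldots, \alpha_j+1, \ldots, \alpha_r\}\}$. Conditions (i) and (ii) are immediate: the entries stay $\geq 2$ and the degree sum increases by exactly $2$, giving $2n+2$. For (iii), the key observation is that incrementing two entries by $1$ raises the potential new maximum by at most $1$ while raising the right-hand side $d_2 + \cdots + d_r - 2r + 4$ by $1$ as well (the number of parts $r$ is unchanged, and the sum of all entries grows by $2$, of which at most $1$ can be "lost" to the maximum). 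The careful bookkeeping is to check that no matter which entry becomes the new largest after reordering, the Hakimi inequality is preserved; I expect this to follow from the inequality for $\balpha$ by tracking how much the increments contribute to each side.

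The $\HH_{n+1}$ case is the more delicate one. From \eqref{def:H}, a monomial of $\H(X_\balpha)$ is obtained by picking one entry $\alpha_k$, splitting it as $\alpha_k = (1+l) + (1 + (\alpha_k - l))$ with $1 \leq l \leq \alpha_k - 1$ — that is, replacing the single part $\alpha_k$ by the two parts $1+l$ and $1 + \alpha_k - l$ — and keeping the remaining $r-1$ parts. So $\bf d$ is the reordering of a sequence with $r+1$ parts whose degree sum is $\sum_i \alpha_i + 2 = 2n+2$, giving condition (ii); and since $1 + l \geq 2$ and $1 + \alpha_k - l \geq 2$ for $l$ in the stated range, condition (i) holds. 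The crux is condition (iii): here the number of parts jumps from $r$ to $r+1$, so the term $-2r+4$ decreases by $2$, and I must show the split does not violate Hakimi's bound. I would argue that splitting $\alpha_k$ into two parts each at least $2$ cannot create a new maximum exceeding $\alpha_1$ (since $1+l \leq \alpha_k \leq \alpha_1$ and similarly for the other piece), while the right-hand side $d_2 + \cdots + d_r - 2(r+1) + 4$ can be compared to the original bound by noting that the two new parts add $\alpha_k + 2$ to the running sum but we subtract an extra $2$.

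The main obstacle I anticipate is the Hakimi inequality (iii) in the $\HH_{n+1}$ case, precisely because the number of parts increases, which tightens the constraint. The cleanest route is probably to rewrite condition (iii) in the equivalent symmetric form $2d_1 \leq \sum_k d_k - 2r + 4 = 2(n+1) - 2r + 4$, i.e. $d_1 \leq n - r + 3$ where $r$ is the number of parts of $\bf d$; then for $\LL_{n+1}$ (where $\bf d$ has $r$ parts and the bound reads $d_1 \leq n+1 - r + 3$ after the sum update) and for $\HH_{n+1}$ (where $\bf d$ has $r+1$ parts and the bound reads $d_1 \leq n+1 - (r+1) + 3 = n - r + 3$), I can compare the new maximum $d_1$ directly against $\alpha_1$ and use the inequality for $\balpha$. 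This reformulation collapses the three separate conditions into a single inequality on the maximum part, which should make both cases short and transparent once the reordering is accounted for.

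\begin{proof}
Let $\balpha = (\alpha_1, \ldots, \alpha_r) \in \dsng(n)$, so that $\alpha_1 \geq \cdots \geq \alpha_r \geq 2$, $\sum_{k=1}^r \alpha_k = 2n$, and $\alpha_1 \leq \alpha_2 + \cdots + \alpha_r - 2r + 4$. The last inequality is equivalent, upon adding $\alpha_1$ to both sides and using $\sum_k \alpha_k = 2n$, to
\begin{align}\label{eq:hakimi_equiv}
\alpha_1 \leq n - r + 2.
\end{align}
We show that every $\bf d \in \TT_{n+1}(\balpha) = \LL_{n+1}(\balpha) \cup \HH_{n+1}(\balpha)$ lies in $\dsng(n+1)$ by checking conditions (i)--(iii); for (iii) we use the form analogous to \eqref{eq:hakimi_equiv}, namely that a sequence $\bf d$ with $s$ parts and degree sum $2(n+1)$ satisfies (iii) if and only if $d_1 \leq (n+1) - s + 2 = n - s + 3$.

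First suppose $\bf d \in \LL_{n+1}(\balpha)$. By \eqref{def:L}, up to reordering $\bf d$ is obtained from $\balpha$ by choosing $1 \leq i < j \leq r$ and replacing $\alpha_i, \alpha_j$ by $\alpha_i + 1, \alpha_j + 1$. The number of parts remains $s = r$, all parts stay $\geq 2$, so (i) holds, and the degree sum becomes $2n + 2 = 2(n+1)$, giving (ii). The largest part $d_1$ of $\bf d$ satisfies $d_1 \leq \alpha_1 + 1$, since every entry is increased by at most $1$. Using \eqref{eq:hakimi_equiv},
\begin{align*}
d_1 \leq \alpha_1 + 1 \leq (n - r + 2) + 1 = n - r + 3 = n - s + 3,
\end{align*}
which is exactly condition (iii) for $\bf d$. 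Hence $\bf d \in \dsng(n+1)$.

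Now suppose $\bf d \in \HH_{n+1}(\balpha)$. By \eqref{def:H}, up to reordering $\bf d$ is obtained from $\balpha$ by choosing an index $k$ and an integer $l$ with $1 \leq l \leq \alpha_k - 1$, and replacing the single part $\alpha_k$ by the two parts $1 + l$ and $1 + \alpha_k - l$. Since $1 \leq l \leq \alpha_k - 1$, both new parts satisfy $1 + l \geq 2$ and $1 + \alpha_k - l \geq 2$, so all entries of $\bf d$ are $\geq 2$ and (i) holds. The number of parts becomes $s = r + 1$, and the degree sum becomes $2n - \alpha_k + (1 + l) + (1 + \alpha_k - l) = 2n + 2 = 2(n+1)$, giving (ii). For (iii), note that each new part is at most $\alpha_k$: indeed $1 + l \leq \alpha_k$ and $1 + \alpha_k - l \leq \alpha_k$ both hold because $l \geq 1$ and $l \leq \alpha_k - 1$. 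Since $\alpha_k \leq \alpha_1$, neither new part exceeds $\alpha_1$, and the remaining parts of $\bf d$ are among $\alpha_1, \ldots, \alpha_r$; therefore $d_1 \leq \alpha_1$. Using \eqref{eq:hakimi_equiv} and $s = r+1$,
\begin{align*}
d_1 \leq \alpha_1 \leq n - r + 2 = n - (r+1) + 3 = n - s + 3,
\end{align*}
which is condition (iii) for $\bf d$. Hence $\bf d \in \dsng(n+1)$. In both cases $\bf d \in \dsng(n+1)$, so $\TT_{n+1}(\balpha) \subseteq \dsng(n+1)$.
\end{proof}
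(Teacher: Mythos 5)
Your proof is correct, and it verifies the same three conditions the paper does, split into the same two cases ($\LL_{n+1}$ versus $\HH_{n+1}$). The difference is in how condition (iii) is handled: the paper works with the Hakimi inequality in its original asymmetric form $d_1 \leq d_2 + \cdots + d_r - 2r + 4$, and therefore has to run a case analysis on which entry of the new sequence becomes the maximum after reordering (for $\LL$: whether $d_1 = \alpha_1$ or $d_1 = \alpha_i + 1$; for $\HH$: whether $j \neq 1$, or $j = 1$ with $d_1$ equal to $\alpha_2$, $1+l$, or $1+\alpha_1 - l$), each subcase requiring its own chain of inequalities. Your reformulation $d_1 \leq n - r + 2$ (obtained by adding $d_1$ to both sides and using the fixed degree sum) makes the condition depend only on the maximum part and the number of parts, so the single bounds $d_1 \leq \alpha_1 + 1$ (for $\LL$, where $s = r$) and $d_1 \leq \alpha_1$ (for $\HH$, where $s = r+1$) dispose of all subcases at once. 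This is a genuine streamlining: it eliminates the bookkeeping that occupies most of the paper's proof, at the cost of one line establishing the equivalence. Both arguments are elementary and of comparable rigor; yours is shorter and, arguably, makes it clearer why the $\HH$ case works (the split cannot increase the maximum, which exactly compensates the loss of $1$ in the bound $n - s + 3$ coming from the extra part).
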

\begin{proof}
Let $\bf d$ be an element of $\TT_{n+1}(\balpha)$. Suppose that $\bf d$ is in $\LL_{n+1}(\balpha)$. Then, recalling \eqref{def:L}, there exist $i < j$ such that
\begin{align*}
{\bf d} = \{\{ \alpha_1, \ldots, \alpha_{i}+1, \ldots,\alpha_{j}+1, \ldots, \alpha_r \}\}.
\end{align*}
Denote by $(d_1, \ldots, d_r)$ the ordered sequence associated to $\bf d$. If $d_1 = \alpha_1$ then since 
\begin{align*}
    \alpha_1 \leq \alpha_2 + \cdots + \alpha_r - 2r +4 \leq \alpha_2 + \cdots + \alpha_r - 2r +4+2,
\end{align*}
one gets
\begin{align*}
    \alpha_1 \leq \alpha_2 + \cdots +(\alpha_i +1)+ \cdots +(\alpha_j+1)+\cdots + \alpha_r - 2r +4,
\end{align*}
hence
\begin{align*}
    d_1 \leq d_2 + \cdots+ d_r - 2r +4.
\end{align*}
Now, if $d_1 =\alpha_i+1$, then one has
\begin{align*}
    \alpha_i+1 &\leq \alpha_1+1 \leq \underset{k=2}{\overset{r}{\sum}} \alpha_k - 2r +4+1\\
    & \leq \alpha_1 + (\alpha_j+1)+\underset{\underset{k\neq i,j}{k=2}}{\overset{r}{\sum}} \alpha_k -2r +4 = d_2 +\cdots +d_r -2r +4.
\end{align*}
Suppose now that $\bf d$ belongs to $\HH_{n+1}(\boldsymbol{\alpha})$. Then, similarly as before, looking closely to \eqref{def:H}, there exist $j$, $l$ such that 
\begin{align*}
    {\bf d} = \{\{ \alpha_1, \ldots, \alpha_{j-1}, 1+l, 1+\alpha_j-l,\alpha_{j+1},\cdots,\alpha_r \}\}.
\end{align*}
If $j\neq 1$, then $d_1 = \alpha_1$ and since
\begin{align*}
    \alpha_1 & \leq \alpha_1 + \cdots + \alpha_r -2r+4= (1+l)+(1+\alpha_k-l)+\underset{\underset{k\neq j}{k=2}}{\overset{r}{\sum}} \alpha_k - 2(r+1) +4,
\end{align*}
one gets
\begin{align*}
    d_1 \leq d_2 + \cdots+ d_{r+1} - 2(r+1) +4.
\end{align*}
If $j=1$ then there are three cases. If $d_1 = \alpha_2$ then
\begin{align*}
    \alpha_2 &\leq \alpha_1 \leq \alpha_2+\cdots+\alpha_r-2r+4 \\ 
    & \leq (\alpha_1+2)+\alpha_3+\cdots+\alpha_r-2(r+1)+4 \\
    & \leq (1+l)+(1+\alpha_1-l)+\alpha_3+\cdots+\alpha_r-2(r+1)+4.
\end{align*}
If $d_1 = 1+l$ then one has
\begin{align*}
    1+l &\leq \alpha_1 \leq \alpha_2+\cdots+\alpha_r-2r+4 \\
    &\leq  \alpha_2+\cdots+\alpha_r+(1+\alpha_1-l)-2-2r+4,
\end{align*}
since $1+\alpha_1-l \geq 2$. The case $d_1 = 1+\alpha_1-l$ is handled in a similar fashion. 
\end{proof}
From Lemma \ref{proof2}, we get now the other inclusion
\begin{align}\label{eq:inclu2}
    \underset{\balpha \in \dsng(n)}{\bigcup} \TT_{n+1}(\balpha) & \subseteq \dsng(n+1).
\end{align}
thus concluding the proof of Proposition \ref{main}.

\section{Acknowledgments}

The author warmly thanks James Sellers for his decisive help in the realisation of this project. The author is funded by the French Community of Belgium with a FRIA grant from the FRS-FNRS.

\bibliographystyle{plain}

\begin{thebibliography}{99}

\bibitem{AlgoR}
P. Mansanarez.
\newblock A Python algorithm for computing the {$R_n$}.
\newblock \url{https://github.com/MapacheRojo/Algorithm_ GammaPolynomials.git}, (2023).

\bibitem{BAK13}
D. Bakry, I. Gentil, M. Ledoux, Analysis and Geometry of Markov Diffusion Operators, {\it Springer Science \& Business Media} (2013)

\bibitem{LED95}
M. Ledoux, L'alg{\`e}bre de Lie des gradients it{\'e}r{\'e}s d'un g{\'e}n{\'e}rateur markovien, {\it Ann. scient. \'Ec. Norm. Sup.} {\bf 28} (1995), no. 4, 435--460

\bibitem{LED16}
M. Ledoux, Heat Flow Derivatives and Minimal Mean-Square Error in Gaussian Noise, {\it IEEE Transactions on Information Theory} {\bf 62} (2016), no. 6, 3401--3409

\bibitem{COV}
T. Cover, J. Thomas, Elements of Information theory, {\it Wiley} (2005)


\bibitem{VerGuo}
D. Guo, Y. Wu, S. Shamai, S. Verd\'u, Estimation in Gaussian Noise: Properties of the Minimum Mean-Square Error, {\it IEEE Transactions on Information Theory} {\bf 57} (2011), no.4, 2371--2385

\bibitem{LED20Rev}
M. Ledoux, Differentials of entropy and Fisher information along heat flow: a brief review of some conjectures, (2020)


\bibitem{MPS24}
P. Mansanarez, G. Poly, Y. Swan, Derivatives of entropy and the MMSE conjecture, {\it IEEE Transactions on Information Theory} {\bf 70} (2024), no. 11, 7647--7663

\bibitem{HAK}
S. L. Hakimi, On realizability of a set of integers as degrees of the vertices of a linear graph, I, {\it J. Soc. Indust. Appl. Math.} {\bf 10} (1962), 496--506

\bibitem{JJ} 
B. Jackson and T. Jord\'{a}n, Non--separable detachments of graphs, {\it J. Comb. Thy. Ser. B} {\bf 87} (2003), 17--37

\bibitem{RST} 
\O. J. R\o dseth, J. A. Sellers, and H. Tverberg, 
Enumeration of the degree sequences of non--separable graphs and connected graphs, {\it European J. Combin.} {\bf 30} (2009), no. 5, 1309--1317.

\bibitem{GYZ2003}
J. L. Gross, J. Yellen, and P. Zhang,
Handbook of graph theory,
CRC press (2003).

\bibitem{Whit1931}
H. Whitney,
Non-separable and planar graphs, {\it Proceedings of the National Academy of Sciences} {\bf 17} (1931). no. 2, 125--127.

\end{thebibliography}

\end{document}